\documentclass[11pt]{article}
\usepackage[T1]{fontenc}
\usepackage{amsfonts,amsmath}
\usepackage{amssymb}
\usepackage{amsthm}
\usepackage{hyperref}

\newtheorem{theorem}{Theorem}
\newtheorem{lemma}[theorem]{Lemma}

\newtheorem{cor}[theorem]{Corollary}
\newtheorem{prop}[theorem]{Proposition}

\newtheorem{prb}{Open problem}

\def\vertex(#1){\put(#1){\circle*{2}}}
\def\vertexo(#1){\put(#1){\circle{2}}}
\def\vert(#1){\put(#1){\circle*{1.5}}}
\def\verto(#1){\put(#1){\circle{1.5}}}
\def\lab(#1)#2{\put(#1){\makebox(0,0)[c]{#2}}}
\title{An improved lower and upper bound of the $k$-limited domination number}

\author{
Gordana Radi\'{c} \\
\small \it University of Maribor, FERI, Koroška cesta 46, 2000 Maribor, Slovenia \\
\small \it IMFM, Jadranska ulica 19, 1000 Ljubljana, Slovenia \\
\small \tt gordana.radic@um.si
}

\date{}    

\begin{document}
\maketitle

\begin{abstract}
We continue the study of $k$-limited domination in graphs, a domination variant in which each vertex of a dominating set may dominate at most $k$ vertices outside the set. This concept extends classical domination by incorporating capacity constraints on dominating vertices.

We improve the general lower bound for the $k$-limited domination number $\gamma_k^L(G)$ by employing the concept of $k$-capacitated domination. As a consequence, we characterize graphs satisfying $\gamma_k^L(G)=\lceil \frac{n}{k+1} \rceil$. We also refine the known upper bound for graphs with $k<\delta(G)$ using the $k$-limited packing number. Under this condition, we describe graphs attaining $\gamma_k^L(G)=n-k$. These results extend and unify previous investigations for the case $k=1$ and provide a complete characterization of graphs attaining the extreme values of the $k$-limited domination number under the considered assumptions.
\end{abstract}

\noindent
{\bf Keywords:} graph theory, domination, $k$-limited domination, $k$-capacitated domination, $k$-limited packing

\section{Introduction}

Let $G=(V(G),E(G))$ be a finite simple graph. A set $D\subseteq V(G)$ is called a \emph{dominating set} of $G$ if every vertex in $V(G)\setminus D$ has a neighbor in $D$; the minimum cardinality among all dominating sets of $G$ is called the \emph{domination number} of $G$, and it is denoted by $\gamma(G)$. Determining the domination number of a graph, usually referred to as the \emph{domination problem}, is one of the most extensively studied topics in Graph Theory, both from theoretical and applied perspectives.

The significance of domination stems largely from its natural interpretation in real-world systems, where it models the problem of placing the minimum number of resources in a network so that every location is either itself a resource or adjacent to one. Motivated by such applications, numerous variants have been introduced to incorporate additional restrictions that arise in practical settings. Consequently, domination-related problems have found applications across a wide range of areas, including computer science, telecommunications, logistics, biology, chemistry, and medicine. For a comprehensive treatment of domination theory, including historical background, applications, and further references, we refer the reader to the monographs \cite{HHH1, HHH3, HHH2}.

Among the numerous domination variants surveyed in these monographs, many impose additional restrictions on vertices outside the dominating set (vertices in the dominating set are sometimes called dominators). Typical examples require every non-dominating vertex to be adjacent to several dominators or prescribe bounds on the number of dominators that may dominate a given vertex. Recently, in \cite{NAS}, a different perspective was introduced through the concept of \emph{$k$-limited domination}, where the restriction is imposed on the vertices inside the dominating set. This approach is motivated by situations in which the selected vertices represent facilities, communication nodes, servers, or service providers with limited operating capacity. In such settings, a single resource cannot usually serve arbitrarily many neighboring units, making it natural to require that every dominating vertex is responsible for at most a bounded number of vertices outside the dominating set. Consequently, $k$-limited domination preserves the covering aspect of classical domination while simultaneously controlling the local load assigned to each dominating vertex.

Formally, let $G$ be a graph and let $k$ be a nonnegative integer. A set $D\subseteq V(G)$ is called a \emph{$k$-limited dominating set} if $D$ is a dominating set of $G$ and $|N(v)\setminus D|\le k$ for every $v\in D$. The minimum cardinality among all $k$-limited dominating sets of $G$ is called the \emph{$k$-limited domination number} of $G$ and is denoted by $\gamma_k^L(G)$. When $k\ge \Delta(G)$, the additional condition is trivially satisfied for every dominating set, and therefore $\gamma_k^L(G)=\gamma(G)$. On the other hand, if $k=0$, then every vertex of a $0$-limited dominating set has no neighbors outside the set, implying $\gamma_0^L(G)=|V(G)|$. Hence, for graphs with $\Delta(G)\ge 2$, the nontrivial range is $1\le k< \Delta(G)$.

The initial study in \cite{NAS} established several fundamental properties of the parameter $\gamma_k^L(G)$. In particular, general lower and upper bounds were derived, exact values were determined for several standard graph families, and connections with other domination-type parameters were investigated. Special attention was devoted to the case $k=1$, where the parameter was studied via its relation to the packing concept. This led to a characterization of graphs attaining the extreme value $\gamma_1^L(G)=n-1$, as well as to a structural description of graphs satisfying $\gamma_1^L(G)=\lceil \frac n2\rceil$.

In this paper, we continue this line of research and extend the study of $\gamma_k^L(G)$ to arbitrary values of $k$. After the preliminaries, we first focus on improving the lower bound of the parameter $\gamma_k^L(G)$. This is achieved via the concept of $k$-capacitated domination, which in particular enables us to determine the graphs satisfying $\gamma_k^L(G)=\lceil \frac{n}{k+1}\rceil$. We then turn to upper bounds in Section 4. In this part, the upper bound is refined for graphs satisfying $k<\delta(G)$, using the $k$-limited packing number, and we further characterize graphs attaining the extreme value $\gamma_k^L(G)=n-k$ under the corresponding degree assumption. Finally, Section 5 concludes the paper with a summary of the main results and some directions for further research.

\section{Preliminaries}

We begin by fixing standard notation and terminology. Let $G = (V(G),E(G))$ be a simple graph, where $V(G)$ and $E(G)$ are its vertex set and edge set, respectively. Two vertices $u,v \in V(G)$ are adjacent if $uv \in E(G)$. For a vertex $v \in V(G)$, the set of all vertices adjacent to $v$ is denoted by $N_G(v)$ and is called the \emph{open neighborhood} of $v$, while $N_G[v] = N_G(v)\cup\{v\}$ denotes its \emph{closed neighborhood}. When the graph $G$ is clear from the context, we omit the subscript. Further, the degree of a vertex $v$ is the number of neighbors of $v$, and it is denoted by $\deg_G(v)$; $\delta(G)$ and $\Delta(G)$ denote the minimum and maximum degree of $G$, respectively. A vertex of degree $1$ is called a \emph{pendant vertex}, and in a tree, such vertices are referred to as \emph{leaves}. For a set $S \subseteq V(G)$, we denote by $\overline{S}$ its complement, i.e.~$\overline{S}=V(G)\setminus S$, while $G[S]$ stands for the subgraph of $G$ induced by $S$.

\medskip
Recall that a set $D \subseteq V(G)$ is called a \emph{dominating set} of $G$ if every vertex in $\overline{D}$ has a neighbor in $D$. The minimum cardinality among all dominating sets is the \emph{domination number} $\gamma(G)$, and a dominating set of cardinality $\gamma(G)$ is called a $\gamma(G)$-set.

We now introduce the central notion of this paper. Let $k$ be a positive integer. A set $D \subseteq V(G)$ is called a \emph{$k$-limited dominating set} if it is a dominating set and satisfies
$$
|N(u)\setminus D| \le k, \qquad \mathrm{for\ every }\ u \in D.
$$
We refer to this additional condition as the \emph{$k$-limited constraint}. The minimum cardinality among all $k$-limited dominating sets is called the \emph{$k$-limited domination number} of $G$, denoted by $\gamma_k^L(G)$, and a minimum such set is called a $\gamma_k^L(G)$-set.

It is wort noting that if $G$ has connected components $G_1,\ldots,G_r$, then
$\gamma_k^L(G) = \sum_{i=1}^r \gamma_k^L(G_i)$,
and therefore we restrict ourselves to connected graphs of order at least three.

\medskip
In~\cite{NAS}, several basic properties of $\gamma_k^L(G)$ were established, including general bounds and exact values for some standard graph families. For completeness, we recall the results that will be used throughout the paper.

\begin{theorem}\cite{NAS}
\label{bounds}
Let $G$ be a connected graph of order $n$, and let $k$ be an integer such that $1\le k < \Delta(G)$. Then $\max\left\{\gamma(G),\ \lceil \frac{n}{k+1}\rceil\right\} 
\leq \gamma_k^L(G) \leq n-k$.
\end{theorem}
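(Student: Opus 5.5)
Three bounds need proving: $\gamma_k^L(G)\ge\gamma(G)$, $\gamma_k^L(G)\ge\lceil\frac{n}{k+1}\rceil$, and $\gamma_k^L(G)\le n-k$. The two lower bounds are direct consequences of the definition. The upper bound requires exhibiting an explicit $k$-limited dominating set of size $n-k$.

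\textbf{Lower bounds.} Every $k$-limited dominating set is in particular a dominating set, so $\gamma(G)\le\gamma_k^L(G)$.

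For the second bound, let $D$ be a $\gamma_k^L(G)$-set. Every vertex of $\overline{D}$ has a neighbor in $D$, so
$$|\overline{D}|\le\sum_{u\in D}|N(u)\setminus D|\le k|D|.$$
Hence $n=|D|+|\overline{D}|\le (k+1)|D|$. Since $|D|$ is an integer, $|D|\ge\lceil\frac{n}{k+1}\rceil$.

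\textbf{Upper bound.} Since $k<\Delta(G)$, there is a vertex $v$ with $\deg(v)\ge k+1>k$. Choose any $k$ neighbors $S=\{w_1,\dots,w_k\}\subseteq N(v)$ and set $D=V(G)\setminus S$, so that $|D|=n-k$. We check the two requirements.

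\begin{itemize}
\item $D$ is dominating: every vertex of $\overline{D}=S$ is adjacent to $v$, and $v\in D$.
\item $D$ satisfies the $k$-limited constraint: for every $u\in D$ we have $|N(u)\setminus D|\le|\overline{D}|=k$.
\end{itemize}

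Therefore $\gamma_k^L(G)\le n-k$.

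There is no serious obstacle here. The only point to watch is that the hypothesis $k<\Delta(G)$ is genuinely needed: it guarantees a vertex with at least $k$ neighbors, so that removing $k$ of them still leaves each removed vertex dominated by $v$. Connectivity of $G$ is not used.
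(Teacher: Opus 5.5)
Your proof is correct and complete. The union bound $|\overline{D}|\le\sum_{u\in D}|N(u)\setminus D|\le k|D|$ gives the counting lower bound. Deleting $k$ neighbors of a vertex $v$ of degree at least $k$ leaves a dominating set in which every vertex trivially has at most $|\overline{D}|=k$ outside neighbors.

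The paper does not prove this theorem itself; it quotes it from \cite{NAS}. Its later results do, however, recover parts of it by other routes, and the comparison is instructive.

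For the lower bound, Theorem~\ref{thm:CDS} assigns each vertex of $\overline{D}$ to one dominating neighbor, obtaining a partition into parts of size at most $k+1$. It then invokes $\lceil\frac{n}{k+1}\rceil\le\tau_k(G)$ from \cite{CD}. Your counting is the same idea with the intermediate parameter $\tau_k(G)$ removed, so it is more self-contained. The paper's detour buys the sharper chain $\max\{\gamma(G),\lceil\frac{n}{k+1}\rceil\}\le\tau_k(G)\le\gamma_k^L(G)$.

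For the upper bound, Section~4 obtains $\gamma_k^L(G)\le n-L_k(G)\le n-k$ by complementing a maximum $k$-limited packing (Lemma~\ref{lem}, Theorem~\ref{k-packing}). That route needs $k\le\delta(G)$ to guarantee that every packing vertex has a neighbor outside the packing. Your set $S$ is also a $k$-limited packing, since any $k$-set is one. By placing it inside a single neighborhood $N(v)$, you secure domination of $S$ directly, so no minimum-degree hypothesis is required.

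One small correction to your closing remark: your argument only uses $\deg(v)\ge k$. So the upper bound already holds for every $k\le\Delta(G)$, and the strict inequality $k<\Delta(G)$ is not genuinely needed for it.
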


We also recall exact values of $\gamma_k^L(G)$ for several standard graph families. Throughout the paper, $P_n$, $C_n$, $S_n$, and $K_n$ denote the path, cycle, star, and complete graph on $n$ vertices, respectively. 

\begin{prop}\cite{NAS}\label{families}
Let $k$ be an integer. Then the following statements hold.
\begin{enumerate}
\item[$\mathrm{(i)}$] For every $n\ge 3$, $\gamma_1^L(P_n)=\left\lceil\frac n2\right\rceil$.
\item[$\mathrm{(ii)}$] For every $n\ge 3$, $\gamma_1^L(C_n)= \left \lfloor \frac n2 \right \rfloor+ \left \lceil \frac n4 \right \rceil - \left \lfloor \frac n4 \right \rfloor$.
\item[$\mathrm{(iii)}$] For every $n$ and $1\le k<n-1$, $\gamma_k^L(S_n)=n-k$.
\item[$\mathrm{(iv)}$] For every $n$ and $1\le k<n-1$, $\gamma_k^L(K_n)=n-k$.
\end{enumerate}
\end{prop}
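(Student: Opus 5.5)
The four parts will be handled separately, each by a matching lower bound and an explicit construction. For the lower bounds I will use Theorem~\ref{bounds} where possible, together with direct counting arguments based on the $k$-limited constraint.

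\emph{Parts (iii) and (iv).} Theorem~\ref{bounds} already gives $\gamma_k^L\le n-k$, since both $S_n$ and $K_n$ have $\Delta=n-1>k$. So only the lower bound needs work. Let $D$ be a $k$-limited dominating set and put $\overline{D}=V\setminus D$.

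For $K_n$: every $u\in D$ satisfies $|N(u)\setminus D|=|\overline{D}|$. Since $D\neq\emptyset$, the constraint forces $|\overline{D}|\le k$, and hence $|D|\ge n-k$.

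For $S_n$ with center $c$: if $c\in D$, the constraint at $c$ gives $|\overline{D}|\le k$. If $c\notin D$, then every leaf must lie in $D$ (a leaf's only neighbor is $c$), so $|D|\ge n-1\ge n-k$.

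In both graphs, taking any $n-k$ vertices, including the center in the star case, attains the bound.

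\emph{Part (i), paths.} For the lower bound, every vertex of $D$ dominates at most one vertex outside $D$, so $n\le 2|D|$, which is the $\lceil n/(k+1)\rceil$ term of Theorem~\ref{bounds} with $k=1$. For the upper bound, label the path $v_1\dots v_n$ and take $D=\{v_2,v_4,\dots\}$ when $n$ is even, adding $v_n$ when $n$ is odd. Then each $v_{2i}\in D$ has at most one neighbor outside $D$: in the even case the first dominator $v_2$ covers $v_1$ and $v_3$, so I will instead pair the vertices as blocks $\{v_{2i-1},v_{2i}\}$ and choose the pattern $v_1,v_4,v_5,v_8,\dots$ (two adjacent dominators, two outsiders, and so on). I will check this choice directly. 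The count is $\lceil n/2\rceil$.

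\emph{Part (ii), cycles.} This is the main obstacle. For the lower bound, each $v\in D$ dominates at most one outsider, and each outsider needs at least one dominator, so $|\overline{D}|\le|D|$. Now view $D$ as a union of maximal runs (arcs) in $C_n$. An arc of length $1$ has two neighbors outside $D$, which is forbidden unless $D$ is the entire cycle. So every arc has length at least $2$. Similarly, a gap of outsiders has length at most $2$, since a middle outsider in a longer gap would be undominated. Writing $a$ for the number of arcs (which equals the number of gaps), we get $|D|\ge 2a$ and $|\overline{D}|\le 2a$. Minimizing $|D|$ subject to $|D|+|\overline{D}|=n$ then gives $|D|\ge n/2$. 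When $n\not\equiv 0\pmod 4$, parity and divisibility force an extra vertex, and $\lfloor n/2\rfloor+\lceil n/4\rceil-\lfloor n/4\rfloor$ matches exactly this case analysis by $n\bmod 4$. For the upper bound, repeat the block pattern ``dominator, dominator, outsider, outsider'' around the cycle and patch the final partial block in each residue class mod $4$. The step I expect to be hardest is the residue-by-residue accounting that shows an extra vertex is really needed when $n\equiv 1,2,3 \pmod 4$.
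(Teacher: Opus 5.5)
The paper does not prove this proposition; it is quoted from \cite{NAS}, so there is no in-paper argument to compare against. Judged on its own, your proposal is a correct, self-contained proof.

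For (iii) and (iv), your case split on whether a universal vertex lies in $D$ is the same mechanism as in Proposition~\ref{prop:k-universal}. That proposition already yields (iv), and also (iii) in the case $k=1$. For $S_n$ with $k\ge 2$ there is only one universal vertex, so your separate observation that $c\notin D$ forces all leaves into $D$ is what is needed.

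For (i), the lower bound is the $\lceil n/(k+1)\rceil$ bound with $k=1$. Your block construction (one center per block $\{v_{2i-1},v_{2i}\}$, alternating which endpoint is chosen) is exactly a $1$-limited-star factor in the sense of Theorem~\ref{thm:kstar}. The failure of your naive choice $\{v_2,v_4,\dots\}$ shows that the choice of centers matters, not merely the matching. The same point appears in (ii): $C_n$ with $n\equiv 2\pmod 4$ has a perfect matching, yet your argument gives $\gamma_1^L(C_n)=n/2+1$.

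A few details to fix when writing it up.

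\emph{Part (i).} Delete the abandoned first construction. Note also that the literal pattern $v_1,v_4,v_5,v_8,v_9,\dots$ (indices $1$ and $4j,4j+1$) leaves $v_n$ undominated when $n\equiv 3\pmod 4$. You must add $v_n$ in that case. Your block view does this automatically, since the final singleton block $\{v_n\}$ contributes its own vertex, giving exactly one dominator per block and hence $\lceil n/2\rceil$ in total.

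\emph{Part (ii).} The step you flag as hardest is already finished by your inequalities $|D|\ge 2a\ge|\overline{D}|$, which are valid whenever $\emptyset\ne D\ne V(C_n)$.
For odd $n$ they give $|D|\ge\lceil n/2\rceil$.
For $n\equiv 2\pmod 4$, the equality $|D|=n/2$ would force $|D|=|\overline{D}|=2a$, i.e.\ $n=4a$, a contradiction.
This matches the formula, which equals $n/2$, $(n+1)/2$, $n/2+1$, $(n+1)/2$ for $n\equiv 0,1,2,3\pmod 4$ respectively.
For the upper bound, appending $D$, $DD$, $DDO$ to $m$ copies of $DDOO$ handles $n=4m+1$, $4m+2$, $4m+3$ respectively.
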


It was also shown in~\cite{NAS} that graphs containing a universal vertex satisfy $\gamma_1^L(G)=n-1$. The following result extends this observation.

\begin{prop}\label{prop:k-universal}
Let $G$ be a connected graph of order $n$, and let $k$ be an integer such that $1\le k<\Delta(G)$. If $G$ contains $k$ universal vertices, then $\gamma_k^L(G)=n-k$.
\end{prop}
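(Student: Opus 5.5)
By Theorem~\ref{bounds} we already have $\gamma_k^L(G)\le n-k$, so only the lower bound $\gamma_k^L(G)\ge n-k$ needs proof. Equivalently, I will show that every $k$-limited dominating set $D$ of $G$ satisfies $|\overline{D}|\le k$.

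Let $U=\{u_1,\dots,u_k\}$ be the set of $k$ universal vertices, and let $D$ be any $k$-limited dominating set. I will split into two cases according to whether $D$ meets $U$.

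\emph{Case 1: $D\cap U\neq\emptyset$.} Pick $u\in D\cap U$. Since $u$ is universal, $N(u)\setminus D=\overline{D}\setminus\{u\}=\overline{D}$, because $u\in D$. The $k$-limited constraint at $u$ then gives $|\overline{D}|\le k$ directly.

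\emph{Case 2: $D\cap U=\emptyset$.} Then $U\subseteq\overline{D}$, so $|\overline{D}|\ge k$. Suppose for contradiction that $|\overline{D}|\ge k+1$. Then $\overline{D}$ contains some vertex $w\notin U$. Since $D$ is dominating, $w$ has a neighbour $v\in D$. Every vertex of $U$ is universal, so $v$ is also adjacent to all $k$ vertices of $U$, and none of these lies in $D$. Hence
\[
|N(v)\setminus D|\ \ge\ |U\cup\{w\}|\ =\ k+1,
\]
contradicting the $k$-limited constraint at $v$. Therefore $|\overline{D}|\le k$ in this case as well.

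The only step that needs care is Case~2, where we must find a dominator that sees all of $U$ together with one extra outside vertex. The dominating property supplies exactly this $v$. Note also that $D\neq\emptyset$ always holds, so the argument does not degenerate.
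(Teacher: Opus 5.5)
Your proof is correct and follows the paper's argument essentially verbatim. It uses the same case split on whether $D$ meets the universal vertices: in Case~1 the constraint at a universal dominator bounds $|\overline{D}|$ directly, and in Case~2 a dominator of a non-universal vertex of $\overline{D}$ violates the constraint. Fixing exactly $k$ universal vertices, as you do, also makes the Case~2 conclusion $|\overline{D}|\le k$ slightly cleaner than the paper's $\overline{D}=U$, where the paper takes $U$ to be the set of all universal vertices.
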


\begin{proof}
Let $U\subseteq V(G)$ be the set of all universal vertices of $G$, and let $D$ be a $\gamma_k^L(G)$-set. Suppose first that there exists an $u\in U\cap D$. Since $u$ is adjacent to every vertex of $G$, the $k$-limited constraint implies that $|\overline{D}|=|N(u)\setminus D|\le k$, and hence $|D|=n-|\overline{D}|\ge n-k$.

Now assume that $U\cap D=\emptyset$. Then all universal vertices belong to $\overline{D}$. If there exists a vertex $v \in \overline{D\cup U}$, i.e.~$v$ is not dominated by itself and it is not a universal vertex, then $v$ must be adjacent to some vertex $x \in D$. However, $x$ is then adjacent to $v$ and all vertices in $U$, implying $|N(x)\setminus D|\ge k+1$, a contradiction with the $k$-limited constraint. Hence, $\overline{D}=U$, and so, $|D|=n-|\overline{D}|=n-k$.

In both cases, we obtain $\gamma_k^L(G)\ge n-k$, and the conclusion then follows from Theorem~\ref{bounds}.
\end{proof}


\section{Lower bound via $k$-capacitated domination}
As already shown in \cite{NAS}, $k$-limited domination can be considered in various contexts related to other domination models, yet it remains a new concept. Among the domination models related to $k$-limited domination, perhaps the closest one is the notion of $k$-capacitated domination introduced in \cite{CD}.

Let $G=(V(G),E(G))$ be a finite simple graph. A set $S=\{v_1,v_2,\ldots, v_r\}\subseteq V(G)$ is called a \emph{$k$-capacitated dominating set} if the vertex set $V(G)$ admits a partition $(V_1,V_2,\ldots,V_r)$ such that for every $i\in\{1,2,\ldots,r\}$ we have
\begin{itemize}
\item[(i)] $v_i\in V_i$;
\item[(ii)] $|V_i|\le k+1$;
\item[(iii)] $v_i$ is adjacent to all other vertices in $V_i$.
\end{itemize}
The minimum cardinality among all such sets is called the $k$-capacitated domination number of $G$, denoted by $\tau_k(G)$. A minimum $k$-capacitated dominating set of $G$ is called a $\tau_k(G)$-set.

\medskip
As every $k$-capacitated dominating set is, in particular, a dominating set, it is clear that $\gamma(G)\le \tau_k(G)$ for all graphs $G$. Moreover, by \cite[Observation 1]{CD}, we also have $\lceil\frac{n}{k+1}\rceil\le\tau_k(G)$, showing that the lower bounds for $\tau_k(G)$ coincide with the lower bounds for $\gamma_k^L(G)$ given in Theorem~\ref{bounds}. This similarity naturally motivates a comparison between the two parameters. We also note that authors of \cite{HHH3} state that each vertex of a $k$-capacitated dominating set dominates at most $k$ vertices outside the set, which may suggest a resemblance to $k$-limited domination. The following examples show that the two concepts are nevertheless distinct.

Let us consider the graph $G$ obtained from the cycle $abca$ by adding a pendant vertex $d$ to $a$. For $k=1$, the set $S=\{a,c\}$ is a $1$-capacitated dominating set with partition $V(G)=(V_1, V_2)$, where $V_1=\{a,d\}$ and $V_2=\{b,c\}$. Hence, $\tau_1(G)\le|S|=2$. However, as a vertex $a$ is universal, Proposition \ref{prop:k-universal} ensures that $\gamma_1^L(G)=4-1=3$. Thus, $\tau_1(G)<\gamma_1^L(G)$.

Another simple case is the graph $H$ obtained from the previous graph $G$ by adding a vertex $u$ adjacent to both $a$ and $b$. Then $S=\{a,b\}$ with partition $(V_1,V_2)$ of $V(H)$, where $V_1=\{a,d\}$ and $V_2=\{b,c,u\}$, is a $2$-capacitated dominating set. Hence, $\tau_2(H)\le 2$. On the other hand, if $D$ is a $\gamma_2^L(H)$-set with $|D|\le2$, then $a\notin D$, as otherwise the $2$-limited constraint is violated. Hence, $d\in D$, and consequently, $|D|=2$. If $D=\{b,d\}$, then $|N(b)\setminus D|=3$, a contradiction, while in the cases $D=\{u,d\}$ or $D=\{c,d\}$ the vertex $c$ or $u$, respectively, is left undominated, a contradiction again. Therefore, $\gamma_2^L(H)\ge 3$ and so, $\tau_2(H)<\gamma_2^L(H)$.

These examples demonstrate that the concepts of $k$-limited and $k$-capacitated domination differ. The relationship between the corresponding parameters is now summarized in the following result.

\begin{theorem}\label{thm:CDS}
Let $G$ be a connected graph of order $n$, and let $k$ be an integer such that $1\le k<\Delta(G)$. If $D$ is a $\gamma_k^L(G)$-set, then $D$ is a $k$-capacitated dominating set as well. Consequently, 
\begin{center}
$\max\left\{\gamma(G),\lceil\frac{n}{k+1}\rceil\right\}
\le \tau_k(G)\le\gamma_k^L(G).$
\end{center}
\end{theorem}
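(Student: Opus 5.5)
The plan is to prove directly that \emph{every} $k$-limited dominating set $D$, not only a minimum one, is a $k$-capacitated dominating set. I would do this by constructing the required partition explicitly through an assignment of each non-dominating vertex to a single dominator. Write $D=\{v_1,\ldots,v_r\}$. Since $D$ is a dominating set, every vertex $u\in\overline{D}$ has at least one neighbor in $D$. For each such $u$, fix one neighbor $f(u)\in D$; when there are several, any choice will do, say the one with the smallest index. This defines a map $f:\overline{D}\to D$ with $f(u)\in N(u)$ for every $u\in\overline{D}$.

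Next, set $V_i=\{v_i\}\cup f^{-1}(v_i)$ for $i\in\{1,\ldots,r\}$. I would then check the three defining conditions in turn.
\begin{itemize}
\item The sets $V_1,\ldots,V_r$ partition $V(G)$. Each $v_i$ lies only in $V_i$, because the $v_i$ are distinct and the preimages under $f$ lie in $\overline{D}$. Each $u\in\overline{D}$ lies in exactly one set, namely $V_i$ with $v_i=f(u)$.
\item Condition (i) holds by construction.
\item Condition (iii) holds because every $u\in f^{-1}(v_i)$ satisfies $f(u)=v_i\in N(u)$, so $u$ is adjacent to $v_i$.
\item Condition (ii) follows from the $k$-limited constraint. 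We have $f^{-1}(v_i)\subseteq N(v_i)\setminus D$, and this set has at most $k$ elements, so $|V_i|\le k+1$.
\end{itemize}
Hence $D$ is a $k$-capacitated dominating set.

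Applying this to a $\gamma_k^L(G)$-set $D$ gives $\tau_k(G)\le|D|=\gamma_k^L(G)$. The lower bound $\max\{\gamma(G),\lceil\frac{n}{k+1}\rceil\}\le\tau_k(G)$ comes from two facts already noted before the statement. First, every $k$-capacitated dominating set is dominating, so $\gamma(G)\le\tau_k(G)$. Second, \cite[Observation 1]{CD} gives $\lceil\frac{n}{k+1}\rceil\le\tau_k(G)$; this also follows directly, since $n=\sum_i|V_i|\le r(k+1)$. Chaining these yields the stated inequalities, which refine Theorem~\ref{bounds}.

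I expect no real obstacle. The one point needing care is that the assignment $f$ must be a function, so that each vertex of $\overline{D}$ is counted in exactly one part. The capacity bound then uses only the inclusion $f^{-1}(v_i)\subseteq N(v_i)\setminus D$, not equality. The hypotheses of connectivity and $k<\Delta(G)$ play no role beyond matching the setting of Theorem~\ref{bounds}.
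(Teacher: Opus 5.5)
Your proof is correct and matches the paper's argument essentially step for step. The paper likewise fixes an arbitrary neighbor $f(u)\in D$ for each $u\in\overline{D}$, sets $V_v=\{v\}\cup\{u\in\overline{D};\ f(u)=v\}$, and gets $|V_v|\le 1+|N(v)\setminus D|\le k+1$ from the $k$-limited constraint; your two additions are also valid, namely that minimality of $D$ is never used and that $\lceil\frac{n}{k+1}\rceil\le\tau_k(G)$ follows directly from $n=\sum_i|V_i|\le r(k+1)$, where the paper instead cites \cite{CD}.
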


\begin{proof}
In order to prove the first statement, let $D$ be a $\gamma_k^L(G)$-set. Since $D$ is a dominating set, every vertex $u \in \overline{D}$ has at least one neighbor in $D$, and for each such vertex $u$, choose arbitrarily one neighbor $f(u)\in D$. Then, for each $v \in D$, define
\begin{equation}\label{eq:CDS}
V_v := \{v\} \cup \{u \in \overline{D};\ f(u)=v\}.
\end{equation}
The sequence of sets $(V_v)_{v\in D}$ is a partition of $V(G)$, since each vertex in $D$ belongs to exactly one of these sets and every vertex in $\overline{D}$ is assigned to exactly one vertex in $D$.

We next verify that this partition satisfies properties (i)-(iii) from the definition of $k$-capacitated dominating set. Property (i) follows immediately by (\ref{eq:CDS}). For (ii) apply the fact that $D$ is a $k$-limited dominating set, and so $|N(v)\setminus D|\le k$, for every $v\in D$. Moreover, every vertex from $V_v\setminus\{v\}$ belongs to $N(v)\setminus D$. Hence,
$$
|V_v| \le 1 + |N(v)\setminus D| \le 1+k,
$$
giving the property (ii). Finally, every vertex in $V_v$ is either equal to $v$ or adjacent to $v$, so $v$ is adjacent to all other vertices in $V_v$. Therefore, the set $D$ with the partition $V(G)=(V_v)_{v\in D}$ is a $k$-capacitated dominating set of $G$.  

As $D$ was an arbitrary $\gamma_k^L(G)$-set, this shows that every $\gamma_k^L(G)$-set is a $k$-capacitated dominating set and so, $\tau_k(G)\le\gamma_k^L(G)$, completing the proof.
\end{proof}

The concept of $k$-capacitated domination actually gives a refined lower bound for the $k$-limited domination number. Since in~\cite{NAS} we posed the problem of characterizing graphs for which the bounds given in Theorem~\ref{bounds} are attained,  Theorem~\ref{thm:CDS} now provides additional insight into this question, allowing us to describe the structure of graphs attaining one of these extreme values. To do this, we need some additional notations.

\medskip
We say that a graph $G$ admits a \emph{$k$-star factor with $t$ components} if $G$ has a spanning subgraph with exactly $t$ connected components, each of which is either an isolated vertex or a star with at most $k$ leaves.

Furthermore, we say that $G$ admits a \emph{$k$-limited-star factor with $t$ components} if it admits a $k$-star factor with $t$ components such that every center of a star component (including an isolated vertex viewed as a trivial star) has at most $k$ neighbors outside the set of all centers of the factor. More precisely, if $C\subseteq V(G)$ denotes the set of centers of all star components in this factor, then $|N_G(v)\setminus C|\le k$ for every $v\in C$.

We remark that a closely related notion was introduced in \cite{gunther} under the name of a \emph{$k$-bounded star factor}. The only difference is that isolated vertices are not allowed as components there, whereas in our setting, they are viewed as stars without leaves.

\begin{theorem}\label{thm:kstar}
Let $G$ be a connected graph of order $n$, and let $k$ be an integer such that $1\le k<\Delta(G)$. Then, $\gamma_k^L(G)=\lceil\frac{n}{k+1}\rceil$ if and only if $G$ admits a $k$-limited-star factor with $\lceil\frac{n}{k+1}\rceil$ components.
\end{theorem}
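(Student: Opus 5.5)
The plan is to prove a slightly stronger intermediate fact and then specialize it with Theorem~\ref{bounds}. The fact is: for every positive integer $t$, $G$ has a $k$-limited dominating set of cardinality $t$ if and only if $G$ admits a $k$-limited-star factor with $t$ components. Combined with the lower bound $\gamma_k^L(G)\ge\lceil\frac{n}{k+1}\rceil$, this gives both directions of Theorem~\ref{thm:kstar}.

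For the forward direction, assume $\gamma_k^L(G)=\lceil\frac{n}{k+1}\rceil$ and let $D$ be a $\gamma_k^L(G)$-set. Build the partition $(V_v)_{v\in D}$ exactly as in the proof of Theorem~\ref{thm:CDS}: assign each $u\in\overline{D}$ to a chosen neighbour $f(u)\in D$. For each $v\in D$, take the star with center $v$ and leaves $V_v\setminus\{v\}$; these are edges of $G$ since $v$ is adjacent to every vertex of $V_v$. If $V_v=\{v\}$, this is an isolated vertex, i.e.\ a trivial star. The union of these stars is a spanning subgraph whose components are exactly the $|D|$ stars. Each star has at most $|N(v)\setminus D|\le k$ leaves, so this is a $k$-star factor with $|D|=\lceil\frac{n}{k+1}\rceil$ components. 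Its set of centers is precisely $C=D$, so the $k$-limited constraint on $D$ says exactly that $|N_G(v)\setminus C|\le k$ for every $v\in C$. Hence the factor is $k$-limited.

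For the converse, suppose $G$ has a $k$-limited-star factor with $t=\lceil\frac{n}{k+1}\rceil$ components, and let $C$ be the set of its centers, so $|C|=t$. The only point needing care is choosing a center in each component: for a single edge $K_2$, either endpoint may be designated the center. Since the definition of the factor fixes $C$, this choice is part of the data and causes no difficulty. Every vertex not in $C$ is a leaf of some star, so it is adjacent to that star's center in $C$; hence $C$ is a dominating set. The $k$-limited condition of the factor is literally the $k$-limited constraint for $C$. So $C$ is a $k$-limited dominating set, and $\gamma_k^L(G)\le t$. Theorem~\ref{bounds} gives $\gamma_k^L(G)\ge t$, so equality holds.

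I expect no real obstacle; the proof is mostly a matter of matching definitions. The step requiring the most attention is confirming that the centers of the factor coincide with the dominating set. This is what lets the "neighbors outside the set of centers" condition translate exactly into the $k$-limited constraint. It also explains why isolated vertices must be allowed as trivial stars: vertices of $D$ that receive no assigned leaves must still form components of the factor.
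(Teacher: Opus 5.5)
Your proof is correct and follows essentially the paper's argument. The forward direction turns the partition from Theorem~\ref{thm:CDS} into stars centered at the vertices of $D$, and the converse shows that the set of centers is a $k$-limited dominating set of size $\lceil\frac{n}{k+1}\rceil$, then closes with the lower bound of Theorem~\ref{bounds}. Your remark that the choice of center in a $K_2$ component is part of the factor's data makes explicit something the paper leaves implicit.
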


\begin{proof}
Let $t:=\lceil\frac{n}{k+1}\rceil$ and assume first that $\gamma_k^L(G)=t$. Let $S=\{v_1,v_2,\ldots,v_t\}\subseteq V(G)$ be a $\gamma_k^L(G)$-set. By Theorem~\ref{thm:CDS}, $S$ is also a $k$-capacitated dominating set. Hence, there exists a partition $(V_1,V_2,\ldots,V_t)$ of $V(G)$ such that for every $i\in\{1,2,\ldots,t\}$ we have $v_i\in V_i$, $|V_i|\le k+1$, and $v_i$ is adjacent to (if they exist) all other vertices in $V_i$.

Now, define a spanning subgraph $F$ of $G$ whose edge set consists of all edges $v_i u$, where $u\in V_i\setminus\{v_i\}$ and $i\in\{1,2,\ldots,t\}$; we keep precisely those edges of $G$ that connect each $v_i$ to vertices in its corresponding part $V_i\setminus\{v_i\}$. Consequently, each component of $F$ is either an isolated vertex or a star with at most $k$ leaves. Thus, $F$ is a $k$-star factor with exactly $t$ components. Moreover, since $S$ is a $k$-limited dominating set of $G$, each vertex $v\in S$ has at most $k$ neighbors outside $S$. Therefore, $F$ is a $k$-limited-star factor with $t$ components.

\medskip
Conversely, assume that $G$ admits a $k$-limited-star factor with $t$ components. Let $F$ be such a factor, and let $C$ denote the set of centers of its components. Since each component of $F$ contributes exactly one vertex to $C$, we have $|C|=t$. In addition, every vertex of $\overline{C}$ belongs to a star component of $F$ and is therefore adjacent to its corresponding center in $C$. Hence, $C$ is a dominating set of $G$.

Furthermore, by the definition of a $k$-limited-star factor, every vertex in $C$ has at most $k$ neighbors outside $C$. Therefore, $C$ is a $k$-limited dominating set of $G$, and consequently $\gamma_k^L(G)\le |C|=t$. Together with the general lower bound from Theorem~\ref{bounds}, we conclude that $\gamma_k^L(G)=t$.
\end{proof}

The special case $k=1$ admits a particularly simple interpretation. In this situation, a $1$-limited-star factor with $\lceil\frac n2\rceil$ components consists of pairwise disjoint copies of $K_2$ together with, possibly, a single isolated vertex. Hence, the existence of such a factor is equivalent to the existence of a perfect matching when $n$ is even, and a near-perfect matching when $n$ is odd. Therefore, Theorem~\ref{thm:kstar} may be viewed as a natural extension of Corollary~14 from \cite{NAS} to arbitrary values of $k$.

\medskip
To close this section, we observe that $\gamma_k^L(G)\ge 2$ for every connected graph $G$ of order $n\ge 3$. Indeed, if $\gamma_k^L(G)=1$, then Theorem~\ref{bounds} yields $\lceil \frac{n}{k+1} \rceil =1$, and hence $n\le k+1$. However, since $\Delta(G)\le n-1$, and thus $k<\Delta(G)\le n-1\le k$, a contradiction arises. Therefore, the smallest possible value of $\gamma_k^L(G)$ is $2$. It is then natural to characterize graphs attaining this bound. Although the order of such graphs cannot exceed $2k+2$, Theorem~\ref{thm:kstar} shows that there exist numerous non-isomorphic graphs with this property. Consequently, rather than pursuing a general characterization of graphs $G$ with $\gamma_k^L(G)=2$, we restrict our attention to two classical graph families.

For this purpose, we say that an edge $xy\in E(G)$ is a \emph{$k$-limited dominating edge} if $\{x,y\}$ is a dominating set with $\deg_G(x)\le k+1$ and $\deg_G(y)\le k+1$.

\begin{cor}\label{cor:all}
Let $G$ be a connected graph of order $n$, and let $k$ be an integer such that $1\le k<\Delta(G)$. Then the following statements hold.
\begin{itemize}
\item[$\mathrm{(i)}$] $\gamma_1^L(G)=2$ if and only if $G\in\{P_3,P_4,C_3,C_4\}$.
\item[$\mathrm{(ii)}$] If $G$ is a tree and $k\ge 2$, then $\gamma_k^L(G)=2$ if and only if $G$ is a double star $S_{k,q}$ of order $k+q+2$ for some $q\le k$.
\item[$\mathrm{(iii)}$] If $G$ is a $d$-regular graph, then $\gamma_k^L(G)=2$ if and only if $d=k+1$ and $G$ admits a $k$-limited dominating edge.
\end{itemize}
\end{cor}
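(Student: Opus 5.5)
The plan is to derive all three parts from a single structural observation. Suppose $D=\{x,y\}$ is a $k$-limited dominating set of a connected $G$ with $n\ge 3$. Every vertex outside $D$ is adjacent to $x$ or $y$, and $|N(x)\setminus D|\le k$, $|N(y)\setminus D|\le k$. Hence $n\le 2k+2$, and $\deg(x),\deg(y)\le k+1$, where the extra $1$ accounts for a possible edge $xy$. Conversely, any such pair gives $\gamma_k^L(G)\le 2$. Combined with the observation just before the corollary that $\gamma_k^L(G)\ge 2$, we get the criterion: $\gamma_k^L(G)=2$ if and only if $G$ has a dominating pair $\{x,y\}$ with $|N(x)\setminus\{x,y\}|\le k$ and $|N(y)\setminus\{x,y\}|\le k$. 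Each part then specializes this to a graph class.

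For (i), take $k=1$. Then $n\le 4$, and each of $x,y$ has at most one neighbor outside $D$. Since $1<\Delta(G)$, $G$ is connected of order $3$ or $4$ and is not $K_2$. I will enumerate the connected graphs on $3$ and $4$ vertices: $P_3,C_3$; and $P_4,C_4,S_4,K_4$, the paw, and the diamond $K_4-e$. The paw has a universal vertex, and $S_4$ and $K_4$ fall under Proposition~\ref{families}; all of these give $n-1=3$. For the diamond, Proposition~\ref{prop:k-universal} also gives $3$. The direct check that $P_3,P_4,C_3,C_4$ attain $2$ is immediate: for example, $\{$second, third vertex$\}$ works on $P_4$, and adjacent vertices work on $C_4$.

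For (ii), let $G$ be a tree and $k\ge2$. First suppose $\gamma_k^L(G)=2$ with $D=\{x,y\}$. If $x,y$ are nonadjacent, the vertices dominated by $x$ and by $y$ must be joined through a common neighbor or through an edge between their neighborhoods. I will show that this forces either a cycle, or else $D$ is not optimal-shaped yet still fine. The cleaner route is to argue directly about trees.
\begin{itemize}
\item Every vertex other than $x,y$ is adjacent to $x$ or $y$, so the diameter is at most $3$.
\item A tree of diameter $\le 2$ is a star, and for a star $\gamma_k^L=n-k$ with $k<n-1$ by Proposition~\ref{families}. This equals $2$ only when $n=k+2$, i.e.\ the double star $S_{k,0}$, if we admit $q=0$ (otherwise handle it as a degenerate case).
\item A tree of diameter $3$ is a double star $S_{p,q}$. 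Taking the two centers as $D$ shows $\gamma_k^L\le2$ exactly when $p,q\le k$.
\item If some center has more than $k$ leaves, then any $2$-set either contains that center, which has more than $k$ outside neighbors, or misses it. In the latter case its leaves must all be in $D$, which needs more than $2$ vertices since $p\ge k+1\ge3$.
\end{itemize}
The main obstacle is matching the indexing convention $S_{k,q}$ with $q\le k$, including the star case. The hypothesis $k<\Delta$ forces the larger side to have exactly $k$ leaves.

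For (iii), let $G$ be $d$-regular with $d>k$. If $\gamma_k^L(G)=2$ with $D=\{x,y\}$, then $d-|N(x)\cap D|\le k$, so $d\le k+1$, and hence $d=k+1$ with $xy\in E(G)$. Thus $xy$ is a $k$-limited dominating edge. Conversely, a $k$-limited dominating edge is a dominating pair whose endpoints each have at most $k$ neighbors outside it, so $\gamma_k^L\le 2$, and equality follows from the lower bound.
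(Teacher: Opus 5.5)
Parts (i) and (iii) are fine. For (i), your enumeration of the eight connected graphs of order $3$ and $4$ is a valid replacement for the paper's appeal to Theorem~\ref{thm:kstar}; you dispose of $S_4$, $K_4$, the paw and the diamond via Propositions~\ref{families} and~\ref{prop:k-universal}. Your (iii) is essentially the paper's argument.

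The gap is in (ii), at the first bullet. The inference ``every vertex other than $x,y$ is adjacent to $x$ or $y$, so the diameter is at most $3$'' is valid only when $xy\in E(G)$. For non-adjacent $x,y$ it fails even in trees. For example, $\{x,y\}$ dominates $P_6=u\,x\,a\,b\,y\,w$, which has diameter $5$, and $P_5=u\,x\,a\,y\,w$ has diameter $4$. You noticed the non-adjacent case but left it unresolved. Your suggestion that non-adjacency ``forces a cycle'' is false: these paths are trees whose non-adjacent pair $\{x,y\}$ is a $k$-limited dominating set for every $k\ge 2$. What excludes them is the hypothesis $k<\Delta(G)$, which you only use at the end to fix the number of leaves at $k$.

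The missing step is short. Suppose $xy\notin E(G)$. Then $\deg(x)=|N(x)\setminus D|\le k$, and likewise $\deg(y)\le k$. Every other vertex $v$ has all its neighbors in $\{x,y\}\cup N(x)\cup N(y)$.
\begin{itemize}
\item If $v\in N(x)\cap N(y)$, then $v$ has no neighbor besides $x$ and $y$, since any further neighbor would create a triangle.
\item If $v\in N(x)\setminus N(y)$, then $v$ has no neighbor in $N(x)$ (triangle with $x$). It also has at most one neighbor in $N(y)$, since two would give a $4$-cycle through $y$. The case $v\in N(y)\setminus N(x)$ is symmetric.
\end{itemize}
Hence $\deg(v)\le 2\le k$ for every vertex, so $\Delta(G)\le k$, a contradiction. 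Therefore $xy\in E(G)$.

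With $xy\in E(G)$ your diameter bound holds and the rest of (ii) goes through. In fact acyclicity then shows directly that every vertex other than $x,y$ is a leaf attached to exactly one of $x,y$. So $G$ is a double star centered at $x,y$ with at most $k$ leaves on each side, and the classification of trees by diameter is not needed.
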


\begin{proof}
(i) For sufficiency, apply Proposition \ref{families}, and for necessity, suppose that $\gamma_1^L(G)=2$. Since each vertex in $\gamma_1^L(G)$-set can have at most one neighbor outside $D$, we obtain $n\le 4$. If  $n=3$, Theorem \ref{thm:kstar} ensures that $G\in\{P_3,C_3\}$, while for $n=4$ provides that $G\in\{P_4,C_4\}$.

\medskip
(ii) If $G=S_{k,q}$ with $q\le k$, then its two central vertices form a dominating set of cardinality two, and each has at most $k$ neighbors outside this set. Hence $\gamma_k^L(G)=2$. 

Conversely, assume that $\gamma_k^L(G)=2$. Since $2=\lceil\frac{n}{k+1}\rceil$, Theorem~\ref{thm:kstar} implies that $G$ admits a $k$-limited-star factor with exactly two components, say $F_1$ and $F_2$. As $G$ is a tree, these two components must be connected by exactly one edge. Moreover, since $k<\Delta(G)$, one of the components, say $F_1$, is a star with exactly $k$ leaves; denote its center by $c_1$. Consequently, as $\Delta(F_1)=k$ and $\Delta(F_2)\le k$, the edge joining $F_1$ and $F_2$ must be incident with $c_1$, and therefore $\deg_G(c_1)=k+1$. In turn, this edge is a $2$-limited dominating edge of $G$. Finally, the endpoint of this edge lying in $F_2$ must dominate all vertices of $F_2$, and hence it is the center of $F_2$ (or possibly the isolated vertex if $F_2$ is trivial). Therefore, $G$ is a double star $S_{k,q}$ for some $q\le k$.

\medskip
(iii) Let $G$ be a $d$-regular graph with $k=d-1$. If $G$ admits a $k$-limited dominating edge $xy$, then $\{x,y\}$ is a dominating set of $G$, and each of $x$ and $y$ has at most $k$ neighbors outside $\{x,y\}$. Hence $\{x,y\}$ is a $k$-limited dominating set, and thus $\gamma_k^L(G)=2$.

For the opposite implication, assume that $\gamma_k^L(G)=2$ and let $D=\{u,v\}$ be a $\gamma_k^L(G)$-set. Then every vertex in $\overline{D}$ is adjacent to at least one of $u$ or $v$. In particular, $u$ and $v$ must be adjacent; otherwise, all neighbors of $u$ belong to $\overline{D}$, so $|N(u)\setminus D|=|N(u)|=d>k$, a contradiction with the $k$-limited constraint. Therefore, $uv$ is a $k$-limited dominating edge. Next, as $u$ has exactly $d-1$ neighbors outside $D$, the $k$-limited constraint implies that $k\ge d-1$. Together with $k<d$, we obtain $d=k+1$.
\end{proof}

\section{Upper bounds via $k$-limited packing}

We now turn to the upper bound of the $k$-limited domination number. By Theorem~\ref{bounds}, we have $\gamma_k^L(G) \le n-k$ for every graph $G$. A natural question is to characterize graphs attaining this bound. By Proposition~\ref{families}, this equality holds in particular for complete graphs and star graphs, which raises the question of whether other graph families share this extremal property. The answer is already affirmative by Proposition \ref{prop:k-universal}, so we want to know what the structure of such graphs is.

This problem has already been studied in \cite{NAS} for $k=1$. In this work, we extend the investigation to $k$ with $1\le k\le\delta(G)$. To this end, we first introduce the following notions, adopted by \cite{Mojdeh, Babak}.

\medskip
A subset $B \subseteq V(G)$ is called a \emph{packing} in a simple graph $G=(V(G),E(G))$ if for every pair of distinct vertices $u,v \in B$, we have $N[u]\cap N[v]=\emptyset$. The maximum cardinality among all packings in $G$ is called the \emph{packing number}, denoted by $\rho(G)$. Equivalently, a set $B \subseteq V(G)$ is a packing if and only if $|N[u]\cap B| \le 1$ for every $u \in V(G)$.

More generally, a subset $B \subseteq V(G)$ is called a \emph{$k$-limited packing} if $|N[u]\cap B| \le k$ for every $u \in V(G)$. The maximum cardinality among all $k$-limited packings in $G$ is called the \emph{$k$-limited packing number}, denoted by $L_k(G)$.

In the following lemma, we show that under a degree condition, every $k$-limited packing naturally yields a $k$-limited dominating set in $G$.

\begin{lemma}\label{lem}
Let $G$ be a connected graph of order $n$, and let $k$ be an integer such that $1\le k\le \delta(G)$. If $B$ is a $L_k(G)$-set, then $\overline{B}$ is a $k$-limited dominating set of $G$.
\end{lemma}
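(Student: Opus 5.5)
The plan is to verify the two defining properties of a $k$-limited dominating set directly for $D:=\overline{B}$: first that $D$ is a dominating set, then that $D$ satisfies the $k$-limited constraint. I expect both to follow from the single inequality $|N[u]\cap B|\le k$, valid for every $u\in V(G)$. So the maximality of $B$ should not really be needed. Any $k$-limited packing should work, and in particular an $L_k(G)$-set does.

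\emph{The $k$-limited constraint.} Take any $v\in D$. Since $v\notin B$, we have $N(v)\setminus D = N(v)\cap B \subseteq N[v]\cap B$. The packing condition applied at $v$ then gives $|N(v)\setminus D|\le |N[v]\cap B|\le k$.

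\emph{Domination.} This is the step where the hypothesis $k\le\delta(G)$ enters, and it is the only point needing care. Take $u\in B$ and suppose, for contradiction, that $u$ has no neighbor in $D$, i.e.\ $N(u)\subseteq B$. Then $N[u]\subseteq B$, so
$$|N[u]\cap B| = \deg_G(u)+1 \ge \delta(G)+1 \ge k+1,$$
which contradicts the definition of a $k$-limited packing. Hence every vertex of $\overline{D}=B$ has a neighbor in $D$.

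This argument also shows $D\neq\emptyset$ whenever $B\neq\emptyset$, so no degenerate case arises. Combining the two steps, $\overline{B}$ is a $k$-limited dominating set of $G$, as stated in Lemma~\ref{lem}.
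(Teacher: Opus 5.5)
Your proof is correct and is essentially the paper's argument. Domination of each $u\in B$ follows from $|N[u]\cap B|\le k\le\deg_G(u)$, which you phrase by contradiction and the paper states directly via $|N(u)\cap B|\le k-1$. The $k$-limited constraint follows from $N(v)\setminus\overline{B}=N(v)\cap B=N[v]\cap B$ for $v\in\overline{B}$, and, as you note, maximality of $B$ is never used in either version.
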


\begin{proof}
Let $B$ be an $L_k(G)$-set. For every $u \in B$ we have $|N[u]\cap B|\le k$. Hence $|N(u)\cap B|\le k-1$, so $u$ has at most $k-1$ neighbors in $B$. Since $\deg_G(u)\ge k$, it has at least one neighbor in $\overline{B}$. Therefore, $\overline{B}$ is a dominating set of $G$.

Next, let $v \in \overline{B}$. As $v \notin B$, it follows that
$|N(v)\cap B| = |N[v]\cap B| \le k$, when applying that $B$ is a $k$-limited packing. Consequently, $|N(v)\setminus \overline{B}| = |N(v)\cap B| \le k$, and hence every vertex in $\overline{B}$ has at most $k$ neighbors outside $\overline{B}$, giving that $\overline{B}$ is a $k$-limited dominating set of $G$, as we claimed.
\end{proof}

The inequality $\rho(G)+k-1 \le L_k(G)$ from \cite[Theorem~3]{Mojdeh}, valid for every connected graph $G$ of order $n$ and every $k \le \Delta(G)$, yields
$n - L_k(G) \le n - k - (\rho(G)-1) \le n - k$. Hence, it provides an improvement over the general upper bound for $\gamma_k^L(G)$ given in Theorem~\ref{bounds}.

\begin{theorem} \label{k-packing}
Let $G$ be a connected graph of order $n$, and let $k$ be an integer such that $1\le k\le \delta(G)$. Then $\gamma_k^L(G) \le n - L_k(G)$. Moreover, equality holds if and only if there exists an $L_k(G)$-set $B$ such that $\overline{B}$ is a $\gamma_k^L(G)$-set.
\end{theorem}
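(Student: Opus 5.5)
The inequality follows at once from Lemma~\ref{lem}. Take any $L_k(G)$-set $B$. Since $1\le k\le\delta(G)$, Lemma~\ref{lem} tells us that $\overline{B}$ is a $k$-limited dominating set of $G$. Minimality of $\gamma_k^L(G)$ then gives
$$
\gamma_k^L(G)\le|\overline{B}|=n-|B|=n-L_k(G).
$$

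For the equality characterization I will treat the two directions separately.

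Assume first that there is an $L_k(G)$-set $B$ for which $\overline{B}$ is a $\gamma_k^L(G)$-set. Then $\gamma_k^L(G)=|\overline{B}|=n-L_k(G)$, and there is nothing more to show.

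Conversely, assume $\gamma_k^L(G)=n-L_k(G)$. Choose an arbitrary $L_k(G)$-set $B$. By Lemma~\ref{lem}, $\overline{B}$ is a $k$-limited dominating set. Its cardinality is $n-L_k(G)=\gamma_k^L(G)$, so it is a $k$-limited dominating set of minimum size, that is, a $\gamma_k^L(G)$-set. This actually proves more than required: under equality, the complement of every $L_k(G)$-set is a $\gamma_k^L(G)$-set. I will mention this in passing.

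I do not expect any real obstacle, since all the substance is already in Lemma~\ref{lem}. The only points to check are that the hypothesis $k\le\delta(G)$ is exactly what Lemma~\ref{lem} requires, and that the existential quantifier in the statement is compatible with the universal conclusion obtained in the converse direction. Both hold, so each direction reduces to a one-line cardinality count.
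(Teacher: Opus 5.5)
Your proof is correct and essentially identical to the paper's. Both derive the inequality from Lemma~\ref{lem} applied to any $L_k(G)$-set. For the converse of the equality case, both take an arbitrary $L_k(G)$-set $B$, whose complement is a $k$-limited dominating set of size $n-L_k(G)=\gamma_k^L(G)$; this also gives the stronger ``every $L_k(G)$-set'' conclusion you point out.
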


\begin{proof}
If $B$ is an $L_k(G)$-set, then by Lemma~\ref{lem}, $\overline{B}$ is a $k$-limited dominating set of $G$, and hence $\gamma_k^L(G)\le |\overline{B}|=n-L_k(G)$.

For the second claim, let $B$ be an $L_k(G)$-set and $\overline{B}$ a $\gamma_k^L(G)$-set. Thus, $\gamma_k^L(G)=|\overline{B}|=n-|B|=n-L_k(G)$. Conversely, assume that $\gamma_k^L(G)=n-L_k(G)$ and let $B$ be an $L_k(G)$-set. By Lemma \ref{lem} we infer that $n-L_k(G)=\gamma_k^L(G)\le |\overline{B}|=n-|B|=n-L_k(G)$. Therefore, $\gamma_k^L(G)=|\overline{B}|$ and $\overline{B}$ is a $\gamma_k^L(G)$-set.
\end{proof}

Let us remark that the assumption $k\le\delta(G)$ in Theorem~\ref{k-packing} is essential; the upper bound does not hold in general when $k>\delta(G)$. For instance, let us consider the tree $T$ on six vertices with $V(T)=\{a,b,c,d,e,f\}$ and edge set $E(T)=\{ab,bc,bd,ce,df\}$. Observe that $\delta(T)=1$ and $\Delta(T)=3$, so let us take $k=2$. It is then clear that the set $B=\{a,b,e,f\}$ is a $2$-limited packing of $T$, implying that $L_2(T)\ge4$. This is further equivalent to $n-L_2(T)\le 2$. On the other hand, we claim that $\gamma_2^L(T)\ge 3$. If this is not the case, then $\gamma_2^L(T)=2$ by Observation preceding Corollary \ref{cor:all}, and so, there exists a $2$-limited dominating set $D\subseteq V(T)$ with $|D|=2$. Since $D$ is actually a dominating set, the vertex $b$ must belong to $D$, as otherwise the leaves $a,e$ and $f$ cannot all be dominated by two vertices. Then, since $\deg_T(b)=3$, at least one of its neighbors belongs to $D$. As $|D|=2$, it is clear that either $D=\{b,c\}$ or $D=\{b,d\}$. In the first case, the vertex $f$ is left undominated, while in the second case the vertex $e$ is not dominated, a contradiction. Therefore, $\gamma_2^L(T)\ge 3$, and in turn  $\gamma_2^L(T)> n-L_2(T)$. 
This shows that Theorem~\ref{k-packing} can not be extended to the case $k>\delta(G)$. So, if the condition $k\le\delta(G)$ is removed from the assumption of Theorem \ref{k-packing}, we obtain the following weaker upper bound.

\begin{prop}\label{packingU}
Let $G$ be a connected graph of order $n$, and let $k$ be an integer such that $2\le k<\Delta(G)$. If $U_k=\{u\in V(G);\ \deg_G(u)< k\}$,
then
$$
\gamma^L_k(G)\le n-L_k(G)+|U_k|.
$$
\end{prop}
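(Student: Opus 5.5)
The plan is to modify the construction of Lemma~\ref{lem}. The argument there fails only because a vertex of $B$ with fewer than $k$ neighbours may have all of its neighbours inside $B$, and then nothing dominates it. The remedy is to move all such low-degree vertices into the dominating set and pay for them in the bound.

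Concretely, fix an $L_k(G)$-set $B$ and put
$$D:=\overline{B}\cup U_k .$$
Since $D\setminus \overline{B}\subseteq B\cap U_k$, we have
$$|D|\le n-|B|+|B\cap U_k|\le n-L_k(G)+|U_k|.$$
So it suffices to show that $D$ is a $k$-limited dominating set of $G$, which gives the claimed upper bound on $\gamma_k^L(G)$.

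First, $D$ is dominating. Take a vertex $u\notin D$. Then $u\in B\setminus U_k$, so $\deg_G(u)\ge k$. Because $B$ is a $k$-limited packing, $|N[u]\cap B|\le k$, and as $u\in B$ this gives $|N(u)\cap B|\le k-1$. Hence $u$ has a neighbour in $\overline{B}\subseteq D$.

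Second, $D$ satisfies the $k$-limited constraint. For $v\in D$ we have $N(v)\setminus D\subseteq N(v)\cap B$, and we split into two cases.

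\begin{itemize}
\item If $v\in\overline{B}$, then $|N(v)\cap B|=|N[v]\cap B|\le k$, exactly as in Lemma~\ref{lem}.
\item If $v\in B\cap U_k$, then $|N(v)\setminus D|\le \deg_G(v)<k$.
\end{itemize}

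In both cases $|N(v)\setminus D|\le k$, so $D$ is a $k$-limited dominating set and the inequality follows.

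The step needing the most care is the domination argument, since it is precisely where the hypothesis $k\le\delta(G)$ was used in Lemma~\ref{lem}. The point is that the only vertices left outside $D$ are those of $B$ of degree at least $k$, and for these that argument still applies verbatim. The assumption $2\le k<\Delta(G)$ seems to be needed only so that the statement is non-trivial; for instance, $U_1$ is empty in a connected graph of order at least three, and then Theorem~\ref{k-packing} already applies. I do not expect to use this assumption in the proof itself.
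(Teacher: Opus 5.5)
Your proof is correct and follows the paper's argument. You take an $L_k(G)$-set $B$, set $D=\overline{B}\cup U_k$, check domination via $\overline{D}=B\setminus U_k$, and verify the $k$-limited constraint with the same two cases. Your count $|D|=n-|B|+|B\cap U_k|$ is actually more careful than the paper's, which asserts $U_k\subseteq B$. That need not hold: in $K_{1,m}$ with $k=2$ and $m\ge 4$ one has $L_2(K_{1,m})=2$ while $|U_2|=m$. The bound survives regardless, since $|\overline{B}\cup U_k|\le|\overline{B}|+|U_k|$.
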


\begin{proof}
Let $B$ be an $L_k(G)$-set and observe that $U_k\subseteq B$. We claim that $D:=\overline{B}\cup U_k$ is a $k$-limited dominating set of $G$.

To see that $D$ is a dominating set, let $v\in \overline{D}=B\setminus U_k$. Applying that the set $B$ is a $k$-limited packing gives that  $|N[v]\cap B|\le k$, i.e. $|N(v)\cap B|\le k-1$. On the other hand, as $v\notin U_k$, it follows that $\deg_G(v)\ge k$. Hence, $v$ has at least one neighbor in $D$.

It remains to be verified that the $k$-limited condition holds. For every vertex of $U_k$ this is immediate, so let $u\in\overline{B}$. Then $|N(u)\setminus D|\le |N(u)\setminus\overline{B}|
=|N[u]\cap B|\le k$.  Thus, every vertex of $D$ has at most $k$ neighbors outside $D$, and therefore $D$ is a $k$-limited dominating set.

Consequently, as $U_k\subseteq B$, we obtain $\gamma_k^L(G)\le |D|
=|\overline{B}|+|U_k|=n-L_k(G)+|U_k|$.
\end{proof}

In Theorem~\ref{k-packing}, the degree assumption guarantees an improvement over the general upper bound for $\gamma_k^L(G)$ from Theorem~\ref{bounds}, whereas this is no longer true for Proposition~\ref{packingU}. To compare the bound from
Proposition~\ref{packingU} with the general bound $n-k$, observe that
$$
n-L_k(G)+|U_k|\le n-k
\quad \iff \quad
L_k(G)\ge k+|U_k|.
$$
Hence, Proposition~\ref{packingU} yields a stronger upper bound for
$\gamma_k^L(G)$ whenever $L_k(G)\ge k+|U_k|$. As the next example
demonstrates, this inequality may hold for some values of $k$ and fail for others, even within the same graph.

For each $i \in \{1,2,3\}$, let $G_i$ be the star graph with center $c_i$ and leaves $a_i,b_i,d_i$. Further, let $x,y$ be two additional vertices forming a path $xy$. The graph $G$ is then obtained from the disjoint union of $G_1,G_2,G_3$ and $xy$ by adding only the edges $xa_i$ and $yb_i$ for all $i\in\{1,2,3\}$. Hence, $\Delta(G)=4$, so we consider $k\in\{2,3\}$.

Since $\{a_1,a_2,a_3,b_1,b_2,b_3\}$ is a $2$-limited packing, we have $L_2(G)\ge 6$. Moreover, as $|U_2|=3$, it follows that $L_2(G) \ge 2+|U_2|$, so Proposition~\ref{packingU} yields a better bound for $\gamma_k^L(G)$ than the general bound $n-k$.

On the other hand, each subgraph $G_i$ contributes at most three vertices to any $3$-limited packing of $G$, since including $c_i$ together with all its neighbors in a $3$-limited packing set $B$ would imply that $|N[c_i]\cap B|=4$, a contradiction. Hence, $L_3(G)\le 9+2=11$, and consequently $L_3(G) < 3 + |U_3| = 12$. Thus, in this case, the general bound for $\gamma_k^L(G)$ is stronger than the bound from Proposition~\ref{packingU}.

\medskip
It is worth noting that when $k\le\delta(G)$, the refinement of the upper bound from Theorem~\ref{bounds} enables us to characterize graphs attaining the extreme value $\gamma_k^L(G)=n-k$. For this purpose, we need to generalize the notion of external private neighborhood from \cite{HHH1}. In the general setting, for a set $T \subseteq V(G)$ and a vertex $t \in T$, the \emph{external $k$-limited neighborhood of $t$ with respect to $T$} is defined as
$$
\mathrm{eln}_k(t,T) = \{ u \in \overline{T};\ |N(u)\cap T| \le k \ \mathrm{and}\ t \in N(u) \}.
$$

\begin{theorem}\label{thm:n-k}
Let $G$ be a connected graph of order $n$, and let $k$ be an integer such that $1\le k<\delta(G)$. Then $\gamma_k^L(G)=n-k$ if and only if $G$ contains $k$ universal vertices or both of the following statements hold.
\begin{enumerate}
\item[$\mathrm{(i)}$] For every subset $S \subseteq V(G)$ with $|S|=k+1$,  the vertices of $S$ have a common neighbor in $\overline{S}$.

\item[$\mathrm{(ii)}$] There is no subset $T \subseteq V(G)$ with $|T|\ge k+2$ such that $\mathrm{eln}_k(t,T) \neq \emptyset$ for every $t \in T$ and $N(T) = \bigcup_{t \in T} \mathrm{eln}_k(t,T)$.
\end{enumerate}
\end{theorem}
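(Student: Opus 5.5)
The plan is to reformulate $\gamma_k^L(G)\le n-k-1$ directly in terms of the complement of a $k$-limited dominating set, and then to split according to the size of that complement. By Theorem~\ref{bounds} we always have $\gamma_k^L(G)\le n-k$. So $\gamma_k^L(G)=n-k$ holds if and only if there is no $k$-limited dominating set $D$ with $|D|\le n-k-1$. Write $T=\overline{D}$. Then $D$ is a $k$-limited dominating set exactly when the following two conditions hold:
\begin{itemize}
\item[(a)] every $t\in T$ has a neighbor in $\overline{T}$;
\item[(b)] every $u\in\overline{T}$ satisfies $|N(u)\cap T|\le k$.
\end{itemize}
Hence $\gamma_k^L(G)=n-k$ if and only if no set $T$ with $|T|\ge k+1$ satisfies (a) and (b). I will show that, for $|T|=k+1$, the nonexistence of such $T$ is exactly condition (i), and, for $|T|\ge k+2$, it is exactly condition (ii). This yields $\gamma_k^L(G)=n-k \iff$ (i) and (ii).

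For $|T|=k+1$, the assumption $k<\delta(G)$ gives (a) automatically. Indeed, each $t\in T$ has at most $k$ neighbors inside $T$ but degree at least $k+1$, so it has a neighbor in $\overline{T}$. Condition (b) fails precisely when some $u\in\overline{T}$ is adjacent to all $k+1$ vertices of $T$, that is, when $T$ has a common neighbor in $\overline{T}$. So no $(k+1)$-set satisfies (a) and (b) if and only if every $(k+1)$-set $S$ has a common neighbor in $\overline{S}$, which is (i).

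For $|T|\ge k+2$, I will show that (a) and (b) together are equivalent to the configuration forbidden in (ii). Here $N(T)$ is read as the external neighborhood $N(T)\setminus T$; this is forced, since each $\mathrm{eln}_k(t,T)\subseteq\overline{T}$.
\begin{itemize}
\item Suppose (a) and (b) hold. By (b), every external neighbor $u$ of $T$ has at most $k$ neighbors in $T$, so $u\in\mathrm{eln}_k(t,T)$ for each $t\in T\cap N(u)$. Thus $N(T)=\bigcup_{t\in T}\mathrm{eln}_k(t,T)$. By (a), every $t$ has an external neighbor, and that neighbor lies in $\mathrm{eln}_k(t,T)$, so each $\mathrm{eln}_k(t,T)$ is nonempty.
\item Conversely, suppose $\mathrm{eln}_k(t,T)\neq\emptyset$ for all $t\in T$. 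Then (a) holds. Suppose also that $N(T)=\bigcup_{t\in T}\mathrm{eln}_k(t,T)$. Any $u\in\overline{T}$ either has no neighbor in $T$, or lies in $N(T)$ and hence in some $\mathrm{eln}_k(t,T)$, so $|N(u)\cap T|\le k$. Thus (b) holds.
\end{itemize}
Hence such a $T$ exists if and only if (ii) fails.

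Finally, the disjunct \emph{``$G$ contains $k$ universal vertices''} needs separate handling. If $G$ has $k$ universal vertices, then $\gamma_k^L(G)=n-k$ by Proposition~\ref{prop:k-universal}, which gives one direction of that part. For the converse, the equivalence above already shows $\gamma_k^L(G)=n-k\Rightarrow$ (i) and (ii), so the stated disjunction follows trivially. For sufficiency, either disjunct gives $\gamma_k^L(G)=n-k$: the universal case by Proposition~\ref{prop:k-universal}, and the case (i)$\wedge$(ii) by the equivalence. Theorem~\ref{k-packing} is not needed, though it can be remarked that (i) forces $L_k(G)=k$.

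I expect the main obstacle to be bookkeeping rather than depth. The steps needing care are interpreting $N(T)$ as the external neighborhood, verifying exactly where $k<\delta(G)$ enters (only in the $|T|=k+1$ case, to obtain (a)), and making sure that sets $T$ larger than $k+1$ are not covered by (i), which is why (ii) is stated separately for $|T|\ge k+2$.
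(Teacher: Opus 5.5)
Your proof is correct, and its core matches the paper's: both pass to the complement $T=\overline{D}$ of a hypothetical $k$-limited dominating set of size at most $n-k-1$. Both then split into the case $|T|=k+1$, handled by (i) using $\delta(G)\ge k+1$, and the case $|T|\ge k+2$, handled by (ii).

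The differences are in how the argument is packaged:
\begin{itemize}
\item \emph{Source of the upper bound.} For the ``if'' direction, the paper needs a $k$-limited dominating set of size $n-k$. It gets one by citing \cite[Theorem~3.4]{BCL} for $L_k(G)=k$ and then applying Lemma~\ref{lem} and Theorem~\ref{k-packing}. You use the general bound $\gamma_k^L(G)\le n-k$ from Theorem~\ref{bounds}, which is all that is needed, so your argument is self-contained.
\item \emph{A single biconditional.} You prove the two directions together for every $T$, so you obtain $\gamma_k^L(G)=n-k \iff$ (i) and (ii) outright. This shows that under $k<\delta(G)$ the alternative ``$G$ contains $k$ universal vertices'' is redundant: by Proposition~\ref{prop:k-universal} it gives $\gamma_k^L(G)=n-k$, hence (i) and (ii). The paper instead handles that alternative separately, and its assumption ``no $k$ universal vertices'' is never actually used in either direction.
\item \emph{What the paper's route buys.} It only adds the thematic point that in the extremal case $\gamma_k^L(G)=n-L_k(G)$, so the packing bound of Theorem~\ref{k-packing} is tight.
\end{itemize}

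One small caution concerns your reading of $N(T)$ as the external neighborhood $N(T)\setminus T$. That is the reading the paper's proof uses implicitly, since it writes $N(\overline{D})=\bigcup_{v\in\overline{D}}(N(v)\cap D)$. It is not, however, forced by the notation. Under the reading $N(T)=\bigcup_{t\in T}N(t)$, the identity in (ii) would simply force $T$ to be independent, which makes (ii) a weaker hypothesis. So state it as a convention rather than as a deduction.
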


\begin{proof}
Assume first that $\gamma_k^L(G)=n-k$ and that $G$ does not contain $k$ universal vertices. To establish property~(i), suppose to the contrary that there exists a set $S\subseteq V(G)$ with $|S|=k+1$ whose vertices do not have a common neighbor in $\overline{S}$. Since $\delta(G)\ge k+1$, every vertex of $S$ has a neighbor in $\overline{S}$, and therefore $\overline{S}$ is a dominating set of $G$. Moreover, as no vertex of $\overline{S}$ is adjacent to all vertices of $S$, each vertex of $\overline{S}$ has at most $k$ neighbors outside $\overline{S}$. Thus $\overline{S}$ is a $k$-limited dominating set. Consequently, $\gamma_k^L(G)\le|\overline{S}|=n-(k+1)$, contradicting the assumption that $\gamma_k^L(G)=n-k$. Hence, property~(i) holds.

Next, we verify property~(ii). Suppose, for a contradiction, that there exists a subset $T\subseteq V(G)$ with $|T|\ge k+2$ such that
$$
N(T)=\bigcup_{t\in T}\mathrm{eln}_k(t,T)
\qquad\mathrm{and}\qquad
\mathrm{eln}_k(t,T)\neq\emptyset
\quad\mathrm{for\ every }\ t\in T.
$$
The latter property implies that every vertex of $T$ has a neighbor in $\overline{T}$, so $\overline{T}$ is a dominating set of $G$. Furthermore, by the former property, every vertex $u\in N(T)$ belongs to $\mathrm{eln}_k(t,T)$ for some $t\in T$, and thus $|N(u)\cap T|\le k$. More precisely, every vertex of $\overline{T}$ has at most $k$ neighbors outside $\overline{T}$. Therefore, $\overline{T}$ is a $k$-limited dominating set. Since $|T|\ge k+2$, we obtain $\gamma_k^L(G)\le |\overline{T}|=n-|T|\le n-(k+2)$, again contradicting $\gamma_k^L(G)=n-k$. This proves property~(ii).

\medskip
Conversely, if $G$ contains $k$ universal vertices, then $\gamma_k^L(G)=n-k$ follows directly from Proposition~\ref{prop:k-universal}. Hence, we may assume that $G$ satisfies properties~(i) and~(ii) and contains no set of $k$ universal vertices. Since $n\ge k+2$, property~(i) provides that $L_k(G)=k$ by~\cite[Theorem 3.4]{BCL}. Let $B$ be an $L_k(G)$-set. By Lemma~\ref{lem}, the set $\overline{B}$ is then a $k$-limited dominating set of $G$ with $|\overline{B}|=n-k$. We claim that it is in fact a $\gamma_k^L(G)$-set.

Suppose otherwise. Then there exists a $\gamma_k^L(G)$-set $D$ with
$|D|\le n-(k+1)$, and therefore $|\overline{D}|\ge k+1$. We first show that $|\overline{D}|=k+1$. Indeed, if $|\overline{D}|\ge k+2$, then every vertex $v\in\overline{D}$ has a neighbor in $D$ (as $D$ is a dominating set of $G$). Moreover, $k$-limited constraint ensures that $\mathrm{eln}_k(v,\overline{D})=N(v)\cap D\neq\emptyset$ for all $v\in\overline{D}$. In turn, as
$$
N(\overline{D})
=\bigcup_{v\in\overline{D}}(N(v)\cap D)
=\bigcup_{v\in\overline{D}}\mathrm{eln}_k(v,\overline{D}),
$$
we are in a contradiction with property~(ii). Therefore, $|\overline{D}|=k+1$. Applying property~(i) to the set $\overline{D}$, we conclude that its vertices have a common neighbor in $D$. Such a vertex would be adjacent to all $k+1$ vertices of $\overline{D}$, contradicting the $k$-limited constraint. 

This shows that no $k$-limited dominating set of cardinality less than $n-k$ exists. Accordingly, $\overline{B}$ is a $\gamma_k^L(G)$-set. Finally, Theorem~\ref{k-packing} provides that $\gamma_k^L(G)=|\overline{B}|=n-k$.
\end{proof}

Let us emphasize that there exist graphs $G$ with $\gamma_k^L(G)=n-k$ that do not satisfy the properties of Theorem~\ref{thm:n-k}; for instance, by Proposition~\ref{families}(iii), stars satisfy $\gamma_k^L(S_n)=n-k$, although property~(i) fails. Consequently, Theorem~\ref{thm:n-k} does not characterize all graphs attaining the extreme value $\gamma_k^L(G)=n-k$, nor does it characterize the subclass of such graphs with $\delta(G)<k$. On the other hand, since every regular graph satisfies $\delta(G)=\Delta(G)$, Theorem~\ref{thm:n-k} yields a complete characterization of regular graphs with $\gamma_k^L(G)=n-k$.

\section{Conclusion}

In this paper, we continued the study of $k$-limited domination initiated in~\cite{NAS}. We improved the general lower bound on $\gamma_k^L(G)$ by relating the parameter to $k$-capacitated domination, and we refined, under a degree condition, the general upper bound by employing the $k$-limited packing number. In particular, when $k\le \delta(G)$, combining known results with those obtained here yields
$$
\rho(G)\le \gamma(G)\le \tau_k(G)\le \gamma_k^L(G)\le n-L_k(G), 
$$
giving the Gallai-type inequalities $\tau_k(G)+L_k(G)\le n$ and $\gamma_k^L(G)+L_k(G)\le n$.

The connection with k-capacitated domination appears particularly promising. While capacitated domination has mainly been studied from algorithmic and computational complexity perspectives (see, for example, \cite{Becker,Kao, LTV}), much less is known about the parameter $\tau_k(G)$ itself. Since $\tau_k(G)\le\gamma_k^L(G)$, the bounds obtained in this paper provide a partial answer to a question raised in \cite{CD} concerning meaningful upper bounds for $\tau_k(G)$, and suggest that $k$-limited domination may serve as a useful tool in the study of capacitated domination.

Several natural questions remain open. The parameter $\gamma_k^L(G)$ is naturally bounded below by both $\gamma(G)$ and $\lceil\frac{n}{k+1}\rceil$. While one of the main results of this paper provides a characterization of graphs attaining the bound $\gamma_k^L(G)=\lceil\frac{n}{k+1}\rceil$, the question of when the other lower bound is attained remains open. In \cite{NAS}, it was shown that the equality $\gamma_k^L(G)=\gamma(G)$ can occur, in particular for efficient graphs under some additional assumption. This naturally leads to the following question.

\begin{prb}
Can we characterize the graphs $G$ for which $\gamma_k^L(G)=\gamma(G)$?
\end{prb}

While the previous question concerns the smallest possible value of $\gamma_k^L(G)$, it is also natural to investigate the opposite extreme case. In this paper, we characterize graphs satisfying $\gamma_k^L(G)=n-k$ under the assumption $k< \delta(G)$. However, the structure of such graphs remains unknown when $\delta(G)\le k<\Delta(G)$.

\begin{prb}
Can we characterize the graphs $G$ satisfying $\gamma_k^L(G)=n-k$ when $\delta(G)\le k<\Delta(G)$?
\end{prb}

\vskip 1pc \noindent{\bf Acknowledgments.} 
The researcher was partially supported by the Slovenian research agency ARIS, program no.\ P1-0288.

\bibliographystyle{plain}
\bibliography{G-Arxiv}

\end{document}